\newtheorem{definition}{Definition}[section]
\newtheorem{theorem}[definition]{Theorem}
\newtheorem{lemma}[definition]{Lemma}
\newtheorem{corollary}[definition]{Corollary}
\renewcommand{\@biblabel}[1]{[#1]\hfill}
\begin{document}
\setcounter{page}{1}

\begin{center}
{\LARGE \bf Optimal cycles enclosing all the nodes of a $k$-dimensional hypercube \\[4mm]}
\vspace{8mm}

{\Large \bf Roberto Rinaldi$^1$ and Marco Rip\`a$^2$}
\vspace{3mm}

$^1$ Independent Researcher \\ 
Rome, Italy \\
e-mail: \url{rinaldi.1668616@studenti.uniroma1.it}
\vspace{3mm}

$^2$ World Intelligence Network\\ 
Rome, Italy\\
e-mail: \url{marco.ripa@mensa.it}

\end{center}
\vspace{8mm}


\sloppy \noindent {\bf Abstract:} We solve the general problem of visiting all the $2^k$ nodes of a $k$-dimensional hypercube by using a polygonal chain that has minimum link-length and we show that this optimal value is given by $h(2,k):=3 \cdot 2^{k-2}$ if and only if $k \in \mathbb{N}-\{0,1\}$. Furthermore, for any $k$ above one, we constructively prove that it is possible to visit once and only once all the aforementioned nodes, $H(2,k):=\{\{0,1\} \times \{0,1\} \times \dots \times \{0,1\}\} \subset \mathbb{R}^k$, with a cycle (i.e., a closed path) having only $3 \cdot 2^{k-2}$ links.

\noindent {\bf Keywords:} Euclidean space, Optimization, Link distance, Combinatorics.

\noindent {\bf 2020 Mathematics Subject Classification:} 05C38 (Primary); 05C12, 91A43 (Secondary).
\vspace{2mm}


\section{Introduction} \label{sec:Intr}
Given $k \in \mathbb{N}-\{0,1\}$, let the finite subset of $2^k$ points belonging to the Euclidean space $\mathbb{R}^k$ be defined as $H(2,k):=\{\{0,1\} \times \{0,1\} \times \dots \times \{0,1\}\} \subset \mathbb{R}^k$, where the symbol ``$\times$'' denotes the well-known cartesian product. Then, the problem of joining all the nodes of $H(2,k)$ with a connected set of segments having minimum cardinality is equivalent to asking ourselves which is the minimum-link covering tree embedding all the nodes of a $k$-dimensional hypercube. This question is not an open problem, since Dumitrescu and T\'oth (see Reference \cite{Dumitrescu:12}, Figure 2), in 2014, easily showed that it is sufficient to connect all the $2^{k-1}$ pairs of opposite nodes with as many segments so that all of them (i.e., exactly $2^{k-1}$ line segments) meet in the center, $\textnormal{C} \equiv \left(\frac{1}{2}, \frac{1}{2}, \dots, \frac{1}{2} \right)$.

Now, it is obvious to understand why the number of (line) segments of the above-mentioned minimum-link covering tree for $H(2,k)$ cannot match the link-length of any polygonal chain covering the same set of $2^k$ nodes.

Thus, we are interested in solving a multidimensional \textit{thinking outside the box} problem, quite similar to the $k$-dimensional generalization of the infamous \textit{nine dots puzzle} \cite{Loyd:7, Chein:20, Kershaw:21} which was solved in 2020 by Rip\`a \cite{Ripa:11}: the crucial special case of finding a minimum-link polygonal chain covering any given $H(2,k):=\{\{0,1\} \times \{0,1\} \times \dots \times \{0,1\}\}$, a fascinating challenge belonging to the general problem of finding a minimum-link covering trail for every set $H(n,k)$ (see References \cite{OEIS:31, Ripa:23, Ripa:22}).

In order to introduce the original results of the present paper (Section \ref{sec:2}), let us give a few definitions first.

\begin{definition} \label{def1.1}
Let $\mathcal{P}(m):=(\rm{S_1})$-$(\rm{S_2})$-$\dots$-$({\rm{S}}_{m+1})$, a polygonal chain consisting of $m$ links, be well-defined through the sequence of its $m+1$ vertices, so $\mathcal{P}(m) \equiv \{\overline{{\rm{S_1 S_2}}} \cup \overline{{\rm{{S_2 S_3}}}} \cup \dots \cup \overline{{\rm{S}}_{m}{\rm{S}}_{m+1}}\}$. \linebreak
In particular, for any $d \in \mathbb{Z}^+ : d \leq m+1$, let the $d$-th vertex of $\mathcal{P}(m) \subset \mathbb{R}^k$ be univocally identified by the $k$-tuple $(x_1, x_2, \dots, x_k)$ (e.g., given $\mathcal{P}(3):=(0,0)$-$(1,0)$-$(1,1)$-$(0,1)$ and $d : d=2$, we have $x_1(S_d)=1 \wedge x_2(S_d)=0$, since $(x_1(S_2), x_2(S_2)) \equiv (1,0)$ for the aforementioned minimum-link polygonal chain covering $H(2,2):=\{{\{0,1\} \times \{0,1\}}\}$).
\end{definition}

\begin{definition} \label{def1.2}
Accordingly to Definition \ref{def1.1}, let $h(2,k)$ denote the link-length of the minimum-link polygonal chain $\mathcal{P}(h(2,k)):=(\rm{S_1})$-$(\rm{S_2})$-$\dots $-$({\rm{S}}_{h(2,k)+1})$ visiting all the nodes $H(2,k):=\{\{0,1\} \times \{0,1\} \times \dots \times \{0,1\}\}$ of the $k$-dimensional hypercube $\{[0, 1] \times [0, 1] \times \dots \times [0, 1]\}$.
\end{definition}

\begin{definition} \label{def1.3}
Let $\mathcal{P}(h(2,k))$ be a (possibly self-intersecting) path if there is not any element of the set $H(2,k)$ which belongs to more than one link of $\mathcal{P}(h(2,k))$. Then, let $\mathcal{P}(h(2,k))$ be a cycle if it is a path such that $(\rm{S_1}) \equiv$ $({\rm{S}}_{m+1})$ (i.e., we call a cycle any closed path).
Furthermore, we define as “perfect covering cycle", $\mathcal{\bar{C}}(h(2,k))$, any closed path such that no element of the set $\{\rm{S_\textit{j}} \mid \textit{j}=1, 2, \dots, \textit{m}+1\}$ belongs to more than two links of the given covering path for $H(2,k)$.
\end{definition}

Lastly, for clarity sake, let us specify that we will use \textit{vertices} and \textit{links} when we are referring to the turning points (usually we consider Steiner points which do not belong to $\{[0, 1] \times [0, 1] \times \dots \times [0, 1]\} \subset \mathbb{R}^k$) of the polygonal chains that we are taking into account, whereas we will prefer \textit{nodes} and \textit{edges} for the respective subsets of points entirely belonging to given $k$-dimensional hypercube.

Since this paper aims to find polygonal chains, embedding $H(2,k)$, which are optimal with respect to the number of line segments, we immediately point out that any covering cycle for $H(2,k)$ is a covering path for the same set of points, and it is also a polygonal chain covering all the $2^k$ nodes of the hypercube $\{[0, 1] \times [0, 1] \times \dots \times [0, 1]\} \subset \mathbb{R}^k$.

Now, a constructive proof of the existence of covering cycles for $H(2,2)$ and $H(2,3)$ having link-length of only $h(2,2)=3$ and $h(2,3)=6$, respectively, has already been shown in Reference \cite{Ripa:13} (e.g., the aforementioned paper, see pages 163–164 and Figures 7 to 9, provides an optimal upper bound for any $k \in \{2,3\}$ that is also achievable by taking into account only covering cycles, instead of generic polygonal chains, since $H(2,2) \subset \mathcal{P}(3)=\left(\frac{1}{2}, \frac{3}{2}\right)$-$(2, 0)$-$(-1, 0)$-$\left(\frac{1}{2}, \frac{3}{2}\right)$ and $H(2,3) \subset \mathcal{P}(6)=\left(\frac{1}{2}, \frac{1}{2}, \frac{3}{2}\right)$-$(2,2,0)$-$(-1,-1,0)$-$\left(\frac{1}{2}, \frac{1}{2}, \frac{3}{2}\right)$-$(2,-1,0)$-$(-1,2,0)$-$\left(\frac{1}{2}, \frac{1}{2}, \frac{3}{2}\right)$).

Thus, from here on, let us assume that $k \geq 2$ is given. The goal of the present research paper is to show that $h_l(2,k)=3 \cdot 2^{k-2}$ is a valid lower bound for any polygonal chain visiting all the elements $H(2,k)$ and to constructively prove that the aforementioned lower bound is equal to the upper bound $h_u(2,k)=3 \cdot 2^{k-2}$ \cite{OEIS:30} which returns the minimum link-lenght of any perfect covering cycle for the same set of nodes.

It follows that we are going to prove that $h_u(2,k)=h_l(2,k)$ by providing optimal covering cycles consisting of $3 \cdot 2^{k-2}$ links for any $k \in \mathbb{N}-\{0,1\}$, and this result will be shown in the next section.


\section{Main Result} \label{sec:2}

In order to prove that $h_l(2,k)=3 \cdot 2^{k-2}$ holds for any $k \in \mathbb{N}-\{0,1\}$, we will introduce the following lemma.

\begin{lemma}\label{lemma 1}
For any $k \in \mathbb{N}-\{0,1\}$, it is not possible to visit more than $4$ distinct elements of the set $H(2,k):=\{\{0,1\} \times \{0,1\} \times \dots \times \{0,1\}\} \subset \mathbb{R}^k$ by using a polygonal chain with $3$ links.
\end{lemma}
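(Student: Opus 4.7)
The plan is to classify the three links by how many vertices of $H(2,k)$ each one hosts. Call a link \emph{double} if it contains two such vertices. I would first show no link hosts more than two, so the naive upper bound is six, and then argue that at least two overlaps are forced whenever the actual count is close to that bound.

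I would establish two structural facts. (i) Any line in $\mathbb{R}^k$ meets $H(2,k)$ in at most two points: if $p\neq q$ are two such vertices with direction $d=q-p\in\{-1,0,1\}^k$, a coordinate-by-coordinate check shows $p+td\in\{0,1\}^k$ forces $t\in\{0,1\}$. Consequently, a double link on line $\ell$ contains the entire chord $\overline{pq}=\ell\cap[0,1]^k$, so each of its two endpoints is either one of $\{p,q\}$ or lies strictly outside $[0,1]^k$. (ii) If two distinct hypercube-vertex lines $\ell_a,\ell_b$ intersect at a point $S$, then $S\in[0,1]^k$; in fact $S$ is either a common hypercube vertex of both lines or the exact midpoint of both chords. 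To prove this I would parametrize $S=p_a+\lambda d_a=p_b+\mu d_b$ and split coordinates by the supports of $d_a,d_b$: a coordinate with $d_{a,i}\neq 0=d_{b,i}$ pins $\lambda\in\{0,1\}$; otherwise the supports agree and a mixed-sign constraint forces $\lambda=\mu=1/2$. Combining (i) and (ii), whenever two \emph{adjacent} links are both double, the joint $S_{i+1}$ lies on both hypercube lines and must be a hypercube vertex common to both chords.

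The main cases then fall out. If all three links are double, both joints $S_2,S_3$ are forced hypercube-vertex overlaps, so the distinct count is at most $6-2=4$. If exactly two adjacent links are double, the per-link sum is at most $5$ and the joint overlap cuts it to $4$. If at most one link is double, the sum is already at most $4$. The remaining case---links $1$ and $3$ double while link $2$ is not---is the main obstacle, since no sharing is forced at either joint.

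For this last case, if link $2$ carries no hypercube vertex the bound is immediate; otherwise let $r_2$ be its unique hypercube vertex. If $r_2\in\{S_2,S_3\}$, fact (i) forces that endpoint into $\{p_1,q_1\}$ or $\{p_3,q_3\}$, giving the needed overlap. The delicate sub-case is $r_2$ strictly interior to link $2$: then $S_2,S_3$ cannot be hypercube vertices (otherwise link $2$ would carry two) and so by (i) both must lie outside $[0,1]^k$. Writing $r_2=(1-t)S_2+tS_3$ with $S_2=p_1+b_1d_1$, $S_3=p_3+b_3d_3$, $b_1,b_3\notin[0,1]$, $t\in(0,1)$, the condition $r_{2,i}\in\{0,1\}$ rules out every coordinate where $d_{1,i}$ and $d_{3,i}$ are both nonzero, forcing $\mathrm{supp}(d_1)\cap\mathrm{supp}(d_3)=\emptyset$. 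The remaining coordinates pin down the scalars $(1-t)b_1$ and $tb_3$, and in each of the four sign combinations of $(b_1,b_3)$ one concludes that some element of $\{p_1,q_1\}$ coincides with some element of $\{p_3,q_3\}$, producing the collision that returns the distinct count to $4$. This final sign-case bookkeeping is the step I expect to require the most care.
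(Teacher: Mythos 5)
Your decomposition by the number of hypercube vertices per link is a genuinely different route from the paper, which instead parametrizes the chain as a sequence of half-lines through an assumed first ``double'' segment and solves the coordinate equations $x_i({\rm V}_j)\in\{0,1\}$ case by case. Your structural facts are sound: (i) is correct, and (ii) is correct once one checks that the mixed-sign case $\lambda+\mu\in\{-1,1\}$ with $\lambda\notin\{0,1\}$ is incompatible with all coordinates staying in $\{0,1\}$; the deduction that a shared endpoint of two double links must be a common chord endpoint (the midpoint option being excluded because a double link must contain its whole chord) is a nice observation, and it cleanly disposes of every case except the one you identify as delicate.

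The genuine gap is in that delicate case, and it is larger than the ``sign-case bookkeeping'' you defer. The assertion that ``the condition $r_{2,i}\in\{0,1\}$ rules out every coordinate where $d_{1,i}$ and $d_{3,i}$ are both nonzero'' cannot be a per-coordinate argument: for a single such coordinate the relation $r_{2,i}=(1-t)p_{1,i}+tp_{3,i}+(1-t)b_1d_{1,i}+tb_3d_{3,i}\in\{0,1\}$ is one equation in the three free parameters $t,b_1,b_3$ and always has solutions. What actually kills overlapping supports is the interaction between several coordinates together with the constraints $b_1,b_3\notin[0,1]$; e.g.\ for $d_1=(1,1,1)$, $d_3=(-1,1,1)$ the system forces $b_1=\tfrac12$, a contradiction of a different kind than the one you announce. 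Relatedly, the conclusion you claim in the disjoint-support case --- that some element of $\{p_1,q_1\}$ must coincide with some element of $\{p_3,q_3\}$ --- is not the only way the count drops to $4$: in some branches the forced collision is instead $r_2\in\{p_1,q_1,p_3,q_3\}$ (the ``fifth'' vertex is one of the four already counted). Either collision suffices for the lemma, but your case list must allow for both, and the argument establishing that \emph{some} collision occurs in every admissible choice of $(b_1,b_3,t)$ and of the coordinate patterns of $p_1,q_1,p_3,q_3$ is precisely the substantive part of the proof; as written, it is asserted rather than proved.
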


\begin{proof}

We prove Lemma \ref{lemma 1} by studying the generic trail $\mathcal{P}(3) \equiv \{\overline{\rm{S_1 S_2}} \cup \overline{\rm{S_2 S_3}} \cup \overline{\rm{S_3 S_4}}\}$ that passes through $4$ (distinct) nodes of $H(2, k)$. Then, we will show that there is no choice for these four nodes (and also for the considered Steiner points) that implies the existence of a fifth node belonging to $\mathcal{P}(3)$. We will start by demonstrating that there is not a trail $\mathcal{P}(2)$ that, passing through at least $3$ nodes, visits a fourth node. Considering that we need to pass through at least $3$ nodes, and be able to visit at most $2$ nodes with the first segment, we can impose that $\overline{{\rm S}_1{\rm S}_2}$ passes through $2$ nodes. Although, for convenience, we will impose ${\rm S}_1 \equiv {\rm V}_1 \equiv {\rm O}$, the obtained result can be extended to any choice of ${\rm S}_1$.

\vspace{2mm}

Let ${\rm S}_j$ be the origin of a given half-line $q_j$. Given a parameter $t_j\in \mathbb{R}:t_j\geq 0$, the corresponding parametric equation is of the form $q_j = {\rm S}_j + t_j \cdot {\rm \vv{{\rm S}_j{\rm V}_{j+1}}}$, where ${\rm V}_j$ indicates the $i$-th node of $H(2,k)$ visited by $\mathcal{P}(m)$. In this way, for $t_j = 1$, each of the Cartesian coordinates of the nodes must assume the value $0$ or $1$. Our goal is to show that this happens only for $t_j = 1$, and if this occurs for other values of $t_j$, then we want to show that the visited node is a node already visited previously.

Since the Steiner point ${\rm S}_{j+1}$ belongs to the considered half-line, let us denote by $\bar{t_j}$ the value of the parameter $t_j$ such that ${\rm S}_{j+1}={\rm S}_{j}+ \bar{t_j} \cdot {\rm \vv{{\rm S}_j{\rm V}_{j+1}}}$ (i.e., ${\rm S}_{j+1}:={\rm S}_{j+1}(\bar{t_j})$).

\vspace{2mm}

The generic point ${\rm S}_2$ is obtained as the last endpoint of a segment passing through ${\rm V}_2$,
\begin{equation}\label{S2}
    {\rm S}_2 = {\rm S}_1 + \bar{t_1} \cdot \vv{{\rm S}_1{\rm V}_2}.
\end{equation}

Now, from here on, we will indicate the $i$-th coordinate of the generic point ${\rm P}$, belonging to the Euclidean space $\mathbb{R}^k$, as $x_i({\rm P})$ (e.g., ${\rm P} \equiv (x_1({\rm P}), x_2({\rm P}), \dots, x_k({\rm P}))$).

\vspace{2mm}

Consequently, from Equation (\ref{S2}), it follows that 
\begin{equation}
    x_i\left({\rm S}_2\right)= \bar{t_1} \cdot x_i\left({\rm V}_2\right),
\end{equation} where $\bar{t_1} \geq1$.

\vspace{4mm}

Similarly, we will have that
\vspace{2mm}
\begin{equation}
    {\rm S}_3 = {\rm S}_2 + \bar{t_2} \cdot \vv{ {\rm S}_2{\rm V}_3}.
\end{equation}

Hence,
\begin{equation}
    \begin{split}
        x_i\left({\rm S}_3\right)&= \bar{t_1} \cdot x_i\left({\rm V}_2\right) + \bar{t_2} \cdot \left(x_i\left({\rm V}_3\right)-\bar{t_1} \cdot x_i \cdot \left({\rm V}_2\right)\right)\\
        &= \bar{t_1} \cdot x_i\left({\rm V}_2\right) \cdot \left(1-\bar{t_2}\right) + \bar{t_2} \cdot x_i\left({\rm V}_3\right),
    \end{split}
\end{equation} where $\bar{t_2} \geq 1$.

Let us consider the segment $\overline{{\rm S}_2{\rm S}_3}$. By disregarding the node ${\rm V}_3$, obtained by imposing $t_2 = 1$, we verify that it does not exist any node ${\rm V_j}$ of $H(2,k)$, belonging to $\overline{{\rm S}_2{\rm S}_3}$, which has not been previously visited.

Thus, for $i<k$, we need to study all the $x_i({\rm V}_j)$ equations, showing that there are no solutions such that $0<t_2<1$. It is not necessary to continue beyond point ${\rm V}_3$, studying solutions for $t_2>1$. If we encounter a node of the set $H(2,k)$ after point ${\rm V}_3$, then we will necessarily have to study also the case in which point ${\rm V}_3$ represents the furthest node and, with $t_2 <1$, we will find the point studied previously.

As a result, we have to study the above-mentioned $x_i({\rm V}_j)$ equations,
\begin{equation}
    \begin{cases}
    \bar{t_1} \cdot x_i({\rm V}_2) + t_2 \cdot (x_i({\rm V}_3)-\bar{t_1} \cdot x_i({\rm V}_2))=x_i({\rm V}_j) \\
    \bar{t_1}>1 \hspace{7.5cm}.\\
    0<t_2<1
    \end{cases}
\end{equation}

Hence,
\begin{align}
    &x_i({\rm V}_2)=x_i({\rm V}_3)=0 \Rightarrow t_2\in \mathbb{R}:0<t_2<1,\\
    &x_i({\rm V}_3)=0 \Rightarrow t_2=\frac{\bar{t_1}-1}{\bar{t_1}}.
\end{align}

Consequently, all the solutions imply $x_i({\rm V}_3)=0$. If $x_i({\rm V}_3)=0$ holds for all $i : i<k$, then $({\rm V}_3)=({\rm V}_1)$.

Now, we are finally ready to study the generic trail $\mathcal{P}(3) \equiv \{\overline{\rm{S_1 S_2}} \cup \overline{\rm{S_2 S_3}} \cup \overline{\rm{S_3 S_4}}\}$.

Thanks to the results discussed above, we know that if $\overline{\rm{S_2 S_3}}$ visits two nodes, then $\overline{\rm{S_1 S_2}}$ and $\overline{\rm{S_3 S_4}}$ visit one node each, so we can impose (from the beginning) that $\overline{\rm{S_1 S_2}}$ visits two nodes of $H(2,k)$.

Such trail, $\mathcal{P}(3)$, can be built starting from the just described trail $\mathcal{P}(2)$, by simply adding a fourth generic Steiner point whose coordinates satisfy
\begin{equation}
    {\rm S}_4 = {\rm S}_3 + \bar{t_3} \cdot \vv{ {\rm S}_3{\rm V}_4},
\end{equation}
so we have
\begin{equation}
    \begin{split}
        x_i({\rm S}_4) = & \bar{t_1} \cdot x_i({\rm V}_2) \cdot (1-\bar{t_2}) + \bar{t_2} \cdot x_i({\rm V}_3) + \bar{t_3} \cdot (x_i({\rm V}_4)-(\bar{t_1} \cdot x_i({\rm V}_2) \cdot (1-\bar{t_2}) + \bar{t_2} \cdot x_i({\rm V}_3))) \\
        =&(\bar{t_1} \cdot x_i({\rm V}_2) \cdot (1-\bar{t_2}) + \bar{t_2} \cdot x_i({\rm V}_3)) \cdot (1-\bar{t_3}) + \bar{t_3} \cdot x_i({\rm V}_4),
    \end{split}
\end{equation}

\vspace{-6mm} \noindent with $\bar{t_3} =1$.
\vspace{2mm}

Before moving on to the segment $\overline{{\rm S}_3{\rm S}_4}$, let us make some considerations for a better understanding of the nature of the next step of the present proof.

We have that $\bar{t_1} \geq 1$ and $\bar{t_2}> 1$, since $\bar{t_2} = 1$ would imply ${\rm S}_3 = {\rm V}_3$. It follows that $\overline{{\rm S}_3{\rm S}_4}$ would visit ${\rm V}_2$ and ${\rm V}_3$, whereas it could not visit other nodes since the set $H(2,k)$ has not more than $2$ collinear nodes.

Under these constraints, the following results are obtained so that we can use them to find the solutions of Equation (\ref{xiV4}).
\begin {enumerate}
    \item $\bar{t_1} \cdot (1-\bar{t_2}) <0$, since $\bar{t_1}> 0$ and $(1-\bar{t_2}) <0$.
    \item $\bar{t_1} \cdot (1-\bar{t_2}) + \bar{t_2} <1$, since $\bar{t_1}=1$ implies $\bar{t_1} \cdot (1-\bar{t_2}) + \bar{t_2} =1$ and\\ $\frac{\partial}{\partial \bar{t_1}}(\bar{t_1} \cdot (1-\bar{t_2}) + \bar{t_2})<0$ $\forall \; \bar{t_1}>1,\bar{t_2}>1$. 
\end {enumerate}

Now, we consider the segment $\overline{{\rm S}_3{\rm S}_4}$. By disregarding the node ${\rm V}_4$, obtained by imposing $t_3 = 1$, we verify that it does not exist any unvisited node ${\rm V}_j$ of $H(2,k)$, belonging to $\overline{{\rm S}_3{\rm S}_4}$.

Thus,
 \begin{equation}\label{xiV4}
 \resizebox{.999\hsize}{!}{$
    \begin{cases}
    \bar{t_1}\cdot x_i({\rm V}_2) \cdot(1-\bar{t_2}) + \bar{t_2}\cdot x_i({\rm V}_3) + t_3\cdot (x_i({\rm V}_4)-(\bar{t_1}\cdot x_i({\rm V}_2) \cdot(1-\bar{t_2}) + \bar{t_2}\cdot x_i({\rm V}_3)))=x_i({\rm V}_j) \\
    \bar{t_1}>1\\
    \bar{t_2}>1\\
    0<t_3<1\\
    \end{cases}$}\hspace{-3.5mm}.
\end{equation}

Hence,
 \begin{align}
    & x_i({\rm V}_2)=x_i({\rm V}_3)=x_i({\rm V}_4)=x_i({\rm V}_j)=0 \Rightarrow t_3\in \mathbb{R}:0<t_3<1, \; & \label{V=V=0}\\
    &x_i({\rm V}_2)=x_i({\rm V}_3)=1,x_i({\rm V}_4)=x_i({\rm V}_j)=0 \wedge \bar{t_1}=\frac{\bar{t_2}}{\bar{t_2}-1} \Rightarrow t_3\in \mathbb{R}:0<t_3<1, \label{V2=V3}\\
    &x_i({\rm V}_2)=x_i({\rm V}_4)=0,x_i({\rm V}_3)=x_i({\rm V}_j)=1  \Rightarrow t_3=\frac{\bar{t_2}-1}{\bar{t_2}}, \label{V2=V4}\\
    &x_i({\rm V}_2)=x_i({\rm V}_3)=x_i({\rm V}_4)=1,x_i({\rm V}_j)=0 \Rightarrow t_3=\frac{\bar{t_1}(1-\bar{t_2})+\bar{t_2}}{\bar{t_1}(1-\bar{t_2})+\bar{t_2}-1}. \label{V2=V3=V4}
\end{align}

There cannot be two indices $i, i'$ such that $(x_i({\rm V}_2)=x_i({\rm V}_3)=1,x_i({\rm V}_4)=0)\wedge (x_i'({\rm V}_2)=x_i'({\rm V}_3)=x_i'({\rm V}_4)=1,x_i'({\rm V}_j)=0)$ (see References (\ref{V2=V3})\&(\ref{V2=V3=V4})), since by imposing $\bar{t_1} = \frac{\bar{t_2}}{\bar{t_2}-1}$ we obtain $t_3 = 0$.

The uniqueness of the indices that simultaneously verify (\ref{V=V=0}),(\ref{V2=V3})\&(\ref{V2=V4}) implies that ${\rm V}_1$ is visited twice, while the uniqueness of the indices that simultaneously verify (\ref{V=V=0}),(\ref{V2=V4})\&(\ref{V2=V3=V4}) implies that ${\rm V}_2$ is visited twice.

Therefore, it is not possible to join more than $4$ nodes of the given set with a polygonal chain consisting of only $3$ links, and this concludes the proof of Lemma \ref{lemma 1}.
\end{proof}

By invoking Lemma \ref{lemma 1}, we can easily prove the following theorem.

\begin{theorem}\label{lower bound}
Let $k\in \mathbb{N}-\{0,1\}$ be given. The link-length of the covering trail $\mathcal{P}(h(2,k))$ for the set $H(2,k)$ satisfies $h(2,k)\geq 3\cdot 2^{k-2}$.
\end{theorem}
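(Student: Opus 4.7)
My plan is to leverage Lemma~\ref{lemma 1} by partitioning the covering chain into consecutive blocks of three links. Given any $\mathcal{P}(m)$ whose links $e_1, e_2, \dots, e_m$ together cover $H(2,k)$, I would write $m = 3q + r$ with $r \in \{0,1,2\}$, form the $q$ disjoint sub-chains $B_j := e_{3j-2} \cup e_{3j-1} \cup e_{3j}$ for $j = 1, \dots, q$, and collect the remaining $r$ links into a tail $T$.

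The core of the argument is then a double count of (node, block) incidences. Each $B_j$ is itself a polygonal chain with exactly three links, so by Lemma~\ref{lemma 1} it contains at most $4$ nodes of $H(2,k)$. For the tail I would use two facts that are either immediate or already proven en route to Lemma~\ref{lemma 1}: a single segment contains at most $2$ nodes of $H(2,k)$ (no three nodes are collinear, since the midpoint of two distinct $\{0,1\}$-vectors has a coordinate equal to $\frac{1}{2}$, which cannot belong to any further node of $H(2,k)$), and a two-link sub-chain contains at most $3$ nodes (this is the $\mathcal{P}(2)$ sub-statement proved at the very beginning of the proof of Lemma~\ref{lemma 1}). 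Since $\mathcal{P}(m)$ covers $H(2,k)$, each of its $2^k$ nodes lies on at least one link and therefore belongs to at least one block or to the tail, which yields
\begin{equation*}
    2^k \;\leq\; 4\,q + (r+1) \;\leq\; 4\,q + 3.
\end{equation*}

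To finish, I would exploit that $k \geq 2$ makes $2^k$ a multiple of $4$: combined with the integrality of $q$, the inequality $2^k \leq 4q + 3$ tightens to $q \geq 2^{k-2}$, and hence $m = 3q + r \geq 3 \cdot 2^{k-2}$, as claimed. I do not anticipate any serious obstacle in this step; all of the genuinely combinatorial work is already encapsulated inside Lemma~\ref{lemma 1}, and what is left amounts to a routine double-counting inequality with a small integrality rounding. The only point that merits a line of care is the treatment of the tail when $r \in \{1,2\}$, and the auxiliary $\mathcal{P}(1)$ and $\mathcal{P}(2)$ bounds mentioned above handle it with room to spare.
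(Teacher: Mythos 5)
Your proposal is correct and follows essentially the same route as the paper: both arguments rest entirely on Lemma~\ref{lemma 1} and then divide the $2^k$ nodes by the four-nodes-per-three-links ratio. If anything, your version is the more rigorous rendering — the paper simply asserts that the $2^{k-2}$ ``groups of $4$ nodes'' each need $3$ segments, whereas your partition of the \emph{links} into blocks of three, together with the explicit handling of the remainder $r$ and the integrality step $2^k \leq 4q+3 \Rightarrow q \geq 2^{k-2}$, turns that assertion into a genuine double-counting inequality.
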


\begin{proof}
There is a total of $2^k$ nodes to be visited. By Lemma \ref{lemma 1}, we can join a maximum of $4$ nodes with a polygonal chain of link-length $3$. Since $H(2,k)$ has $\frac{2^k} {4}$ groups of $4$ nodes, the $4$ nodes of each of these groups require a minimum of $3$ segments to be visited. Therefore, $h(2,k) \geq 3 \cdot 2^{k-2}$.
\end{proof}

Now, we need to find the shortest possible covering path, $\mathcal{P} (h(2,k))$. For this purpose, it is possible to prove the following result.

\begin{theorem}\label{upper bound 1}
Given $H(2,k)$ with $k\in \mathbb{N}-\{0,1\}$, it is always possible to construct a covering cycle $\mathcal{P}(h(2,k))$ of link-length $h(2,k) = 3 \cdot 2^{k-2}$.
\end{theorem}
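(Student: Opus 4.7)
The plan is to give an explicit, uniform construction that generalizes the two-triangle chain exhibited for $H(2,3)$ in the introduction to arbitrary dimension. For every $k \geq 2$ I would place an apex
\[
A := \left(\tfrac{1}{2},\tfrac{1}{2},\ldots,\tfrac{1}{2},\tfrac{3}{2}\right) \in \mathbb{R}^k
\]
one unit above the center of the hypercube and assemble the required cycle out of $2^{k-2}$ triangular sub-chains sharing $A$, each one visiting exactly $4$ nodes of $H(2,k)$ with exactly $3$ links.

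Concretely, for each representative $\boldsymbol{\epsilon} = (0,\epsilon_2,\ldots,\epsilon_{k-1}) \in \{0\} \times \{0,1\}^{k-2}$ I would introduce the outer vertices
\[
B_{\boldsymbol{\epsilon}}^{-} := (3\epsilon_1-1,\,3\epsilon_2-1,\ldots,3\epsilon_{k-1}-1,\,0),\qquad B_{\boldsymbol{\epsilon}}^{+} := (2-3\epsilon_1,\,2-3\epsilon_2,\ldots,2-3\epsilon_{k-1},\,0),
\]
and the triangular chain $T_{\boldsymbol{\epsilon}} := A$-$B_{\boldsymbol{\epsilon}}^{-}$-$B_{\boldsymbol{\epsilon}}^{+}$-$A$. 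Concatenating the $2^{k-2}$ triangles at the common apex gives a closed polygonal chain with exactly $3\cdot 2^{k-2}$ links, so the length bookkeeping is immediate; what remains is to verify that it actually is a covering cycle in the sense of Definition~\ref{def1.3}.

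The verification reduces to three parametric checks per triangle: the apex-to-outer segment $\overline{AB_{\boldsymbol{\epsilon}}^{-}}$ meets $H(2,k)$ only at $(\epsilon_1,\ldots,\epsilon_{k-1},1)$, reached at parameter $t=1/3$; the base segment $\overline{B_{\boldsymbol{\epsilon}}^{-}B_{\boldsymbol{\epsilon}}^{+}}$ meets $H(2,k)$ exactly at $(\epsilon_1,\ldots,\epsilon_{k-1},0)$ and $(1-\epsilon_1,\ldots,1-\epsilon_{k-1},0)$ at $t=1/3$ and $t=2/3$; and the outer-to-apex segment $\overline{B_{\boldsymbol{\epsilon}}^{+}A}$ meets $H(2,k)$ only at $(1-\epsilon_1,\ldots,1-\epsilon_{k-1},1)$ at $t=2/3$. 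Since all coordinates vary linearly and synchronously along each segment, the $k$-th coordinate alone forces $t$ into a short list of admissible values, after which the remaining coordinates are automatically in $\{0,1\}$ exactly at the listed nodes, so no stray hypercube node is visited. A short bookkeeping argument then shows that the $2^{k-2}$ quadruples of visited nodes partition $H(2,k)$ (any node $(\delta_1,\ldots,\delta_k)$ is assigned to the triangle indexed by $\boldsymbol{\epsilon}=(0,\delta_2,\ldots,\delta_{k-1})$ when $\delta_1=0$, and by $\boldsymbol{\epsilon}=(0,1-\delta_2,\ldots,1-\delta_{k-1})$ when $\delta_1=1$), so every hypercube node lies on exactly one link of the cycle.

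The only real obstacle is the coordinate-by-coordinate check that no unintended element of $H(2,k)$ lies on a triangle segment besides the intended ones; this is precisely where the choice of apex height $\tfrac{3}{2}$ matters (forcing $x_k$ into $\{0,1\}$ at only two admissible parameter values per segment) and the ``$3\epsilon-1,\,2-3\epsilon$'' recipe for the outer points does its job (making the remaining coordinates land in $\{0,1\}$ precisely at those parameter values). Once these checks are carried out, the construction produces a covering cycle of length $3\cdot 2^{k-2}$, which, combined with Theorem~\ref{lower bound}, yields the equality $h(2,k) = 3\cdot 2^{k-2}$ claimed in Theorem~\ref{upper bound 1}.
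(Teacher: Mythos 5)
Your construction is correct and is essentially identical to the paper's own proof: the paper likewise places a single apex above the centre of the hypercube, partitions the $2^k$ nodes into $2^{k-2}$ rectangles formed by pairs of antipodal node-pairs (indexed, in its notation, by vectors $\vv{s_l}$ with entries $\pm\tfrac12$ and first entry fixed, which correspond exactly to your $\boldsymbol{\epsilon}\in\{0\}\times\{0,1\}^{k-2}$), covers each rectangle with a $3$-link triangle through the common apex using the same outer points ${\rm C}\pm 3\vv{s_l}$ that your ``$3\epsilon-1$, $2-3\epsilon$'' recipe produces, and concatenates the triangles at the apex. The only caveat is a small imprecision in your sketch (on the base segment the $k$-th coordinate is identically $0$, so it is the first coordinate, running from $-1$ to $2$, that forces $t\in\{\tfrac13,\tfrac23\}$), but the checks you defer do go through exactly as you describe.
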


\begin{proof}

It is possible to create an algorithm that generates a covering circuit for $H(2,k)$ whose link-length exactly coincides with the lower bound stated by Theorem \ref{lower bound}. The algorithm is valid for any finite number of dimensions.

\vspace{2mm}

First of all, we notice that it is always possible to join the four nodes of a rectangle with a covering circuit of link-length $3$. In fact, given the set $\{ \{0,1\} \times \{0,b\} \}$, we can have the covering circuit $({\rm S_1})$-$({\rm S_2})$-$({\rm S}_3)$-$({\rm S}_{1})$, with the elements of the set $\{{\rm S_1},{\rm S_2},{\rm S_3} \}$ given by
\begin{align*}
    &{\rm S}_1\equiv \left(\frac{b}{2},\frac{3}{2} \right);\; & & \\
    &{\rm S}_2 \equiv (-b,0)\; & {\rm S}_1+\frac{1}{3}\cdot \vv{{\rm S}_1{\rm S}_2}=(0,1); & \nonumber\\
    &{\rm S}_3\equiv (0,2\cdot b)\; & {\rm S}_2+\frac{1}{3}\cdot\vv{{\rm S}_2{\rm S}_3}=(0,0), & \;\;\; \; {\rm S}_2+\frac{2}{3}\cdot \vv{{\rm S}_2{\rm S}_3}=(b,0); \nonumber \\
    &{\rm S}_4\equiv {\rm S}_1 \; & {\rm S}_3+\frac{2}{3}\cdot\vv{{\rm S}_3{\rm S}_4}=(b,1). & \nonumber
\end{align*}

We consider the sheaf of planes that have in common the line $r:={\rm C}+t\cdot \vv{e_k}$\hspace{1mm}, where \linebreak ${\rm C}\equiv \left(\frac{1}{2}, \frac{1}{2},\dots,\frac{1}{2},0 \right)$ and $\vv{e_k}:= (0,0, \dots,0,1)$ is the second vector of the canonical basis. 
These planes have parametric equation ${\rm C}+t\cdot \vv{e_k} + u\cdot \vv{s}$ with $\vv{e_k}$ and $\vv{s}$ being linearly independent vectors.

Let $\vv{s_l}\in \{\{-\frac{1}{2}\}\times\{-\frac{1}{2},\frac{1}{2}\}\times\{-\frac{1}{2},\frac{1}{2}\}\times \cdots \times\{-\frac{1}{2},\frac{1}{2}\}\times\{0\}\}\subseteq \mathbb{R}^{k-2}$ be the vector such that $l \in \mathbb{N} : l<2^{k-2}$.

If $t=u=1$, then we obtain the point ${\rm V}_{4l+1} \in H(2,k)$ such that\\ ${\rm V}_{4l+1}\equiv (0,x_2({\rm V}_{4l+1}),x_3({\rm V}_{4l+1}),\dots,x_{k-1}({\rm V}_{4l+1}),1)$, where
\begin{equation}
    \begin{cases}
       x_i({\rm V}_{4l+1})= 1 & \text{if} \; x_i(\vv{{ s}_{l}})=+\frac{1}{2}\\
       x_i({\rm V}_{4l+1})= 0 & \text{if} \; x_i(\vv{{ s}_{l}})=-\frac{1}{2}
    \end{cases}.
\end{equation}

If $t=0 \wedge u=1$, then we obtain the point ${\rm V}_{4l+2} \in H(2,k)$ such that\\ ${\rm V}_{4l+2}\equiv (0,x_2({\rm V}_{4l+2}),x_3({\rm V}_{4l+2}),\dots,x_{k-1}({\rm V}_{4l+2}),1)$, where  
\begin{equation}
    \begin{cases}
       x_i({\rm V}_{4l+2})= 1 & \text{if} \; x_i(\vv{{ s}_{l}})=+\frac{1}{2}\\
       x_i({\rm V}_{4l+2})= 0 & \text{if} \; x_i(\vv{{ s}_{l}})=-\frac{1}{2}
    \end{cases}.
\end{equation}

If $t=0 \wedge u=-1$, then we obtain the point ${\rm V}_{4l+3} \in H(2,k)$ such that\\ ${\rm V}_{4l+3}\equiv (0,x_2({\rm V}_{4l+3}),x_3({\rm V}_{4l+3}),\dots,x_{k-1}({\rm V}_{4l+3}),1)$, where  
\begin{equation}
    \begin{cases}
       x_i({\rm V}_{4l+3})= 0 & \text{if} \; x_i(\vv{{ s}_{l}})=+\frac{1}{2}\\
       x_i({\rm V}_{4l+3})= 1 & \text{if} \; x_i(\vv{{ s}_{l}})=-\frac{1}{2}
    \end{cases}.
\end{equation}

If $t=1 \wedge u=-1$, then we obtain the point ${\rm V}_{4l+4} \in H(2,k)$ such that\\ ${\rm V}_{4l+4}\equiv (0,x_2({\rm V}_{4l+4}),x_3({\rm V}_{4l+4}),\dots,x_{k-1}({\rm V}_{4l+4}),1)$, where  
\begin{equation}
    \begin{cases}
       x_i({\rm V}_{4l+4})= 0 & \text{if} \; x_i(\vv{{ s}_{l}})=+\frac{1}{2}\\
       x_i({\rm V}_{4l+4})= 1 & \text{if} \; x_i(\vv{{ s}_{l}})=-\frac{1}{2}
    \end{cases}.
\end{equation}

Being $\{\sigma\}_l:=\{\sigma_l : l=0,1,2,\dots,2^{k-2}-2,2^{k-2}-1\}$ the set of the planes containing $r$ and ${\rm C} + \vv{s_l}$, in total we will have exactly $4$ nodes of $H(2,k)$ lying on each one of the aforementioned planes (i.e., there are $2^{k-2}$ planes $\sigma_l$ such that $0 \leq l < 2^{k-2}$, being $2^{k-2}$ the multisubsets of size $k-2$ from the set $\{\frac{1}{2},-\frac{1}{2}\}$). 

Since each of the $2^{k-2}$ multisubsets is different from all the others, it follows that it does not exist any pair of positive integers $(j,j') : j \neq j'$ such that ${\rm V}_{j} \equiv {\rm V}_{j'}$. Consequently, each plane contains exactly $4$ nodes that do not belong to any other plane $\sigma_l$.

Thus, there are $2^{k-2}$ planes that include $4$ different nodes each, for a total of $2^k$ distinct nodes. Since a $k$-dimensional hypercube has exactly $2^k$ nodes, we conclude that each point lies on one and only one plane $\sigma_l$.

Let $l \in \mathbb{N}_0 : 0 \leq l \leq 2^{k-2}-1$ be given. Then, the four points ${\rm V}_{4l+1}$,${\rm V}_{4l+2}$,${\rm V}_{4l+3}$, and ${\rm V}_{4l+4}$ identify the nodes of a rectangle with base $1$ and height $\sqrt{2 \cdot (k-1)}$. In fact, $\vv{e_2}=(0,1,0,0,\dots,0)$ forms an orthogonal basis with vector $\vv{s}$ that has coordinate $s_2=0$.

We have already proven that the nodes in this orthogonal basis are in position $(0,-1)$; $(0,1)$; $(1,1)$; $(1,-1)$ so that, in this basis, they are vertices of a rectangle of base $1$ and height $2$. Consequently, being $\sqrt{\frac{1}{2} \cdot (k-1)}$ the magnitude of vector $\vv{s}$, we get a height of $2 \cdot \sqrt{\frac{1}{2} \cdot (k-1)}=\sqrt{2 \cdot (k-1)}$ in the canonical basis of the space $E$.

Finally, we have $2^{k-2}$ rectangles, whose vertices can be covered by $2^{k-2}$ covering circuits of link-length $3$ (see Figures 1\&2).

\begin{figure}[H]
\begin{center}
\includegraphics[width=15cm]{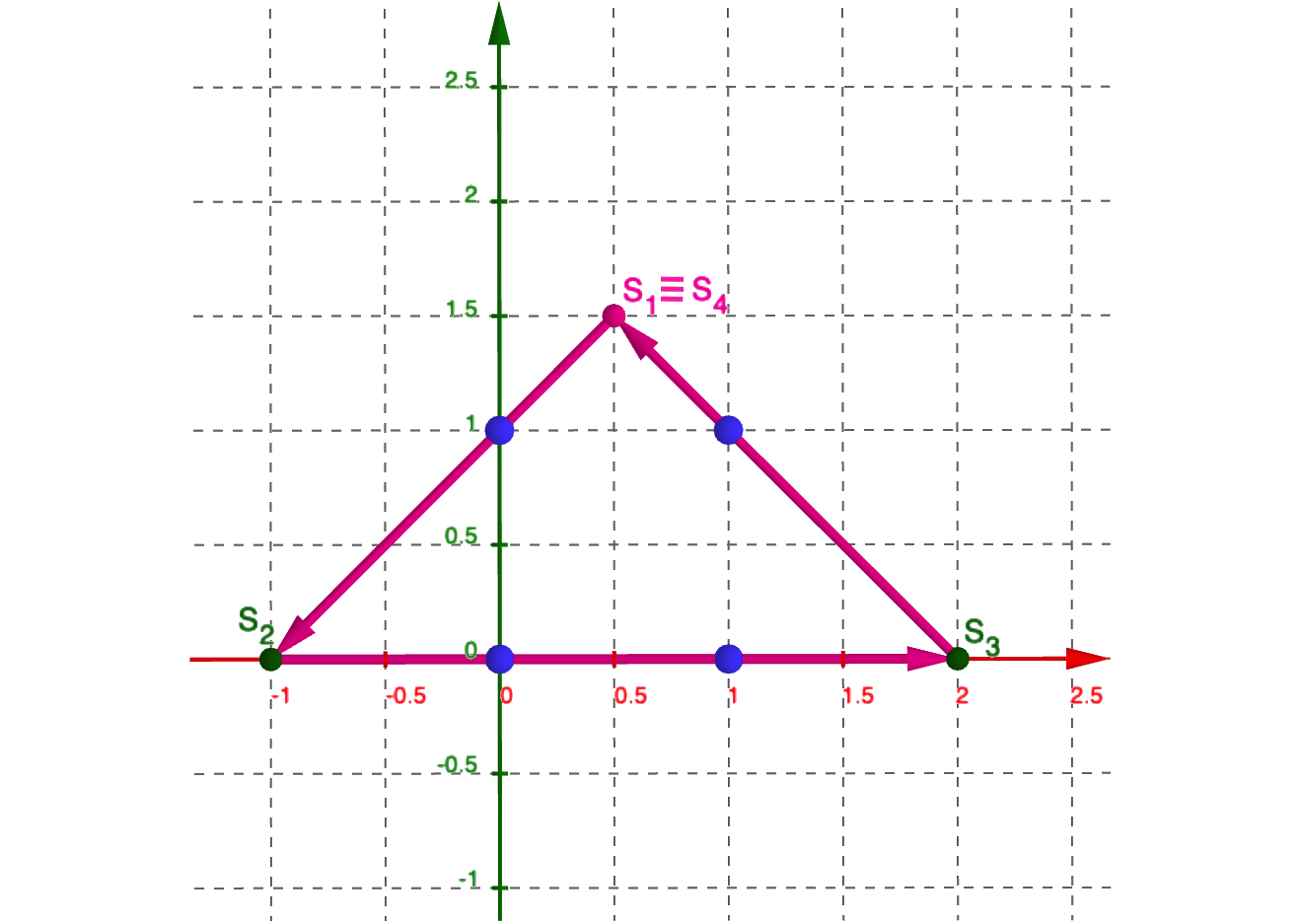}
\end{center}
\caption{The minimum-link perfect covering cycle $\mathcal{\bar{C}}(h(2,2)):=\left(\frac{1}{2},\frac{3}{2}\right)$-$(-1,0)$-$(2,0)$-$\left(\frac{1}{2},\frac{3}{2}\right)$\\ joins all the nodes of $H(2,3)$ (picture realized with GeoGebra \cite{Geogebra:32}).}
\label{fig:Figure_1}
\end{figure}

\begin{figure}[H]
\begin{center}
\includegraphics[width=12cm]{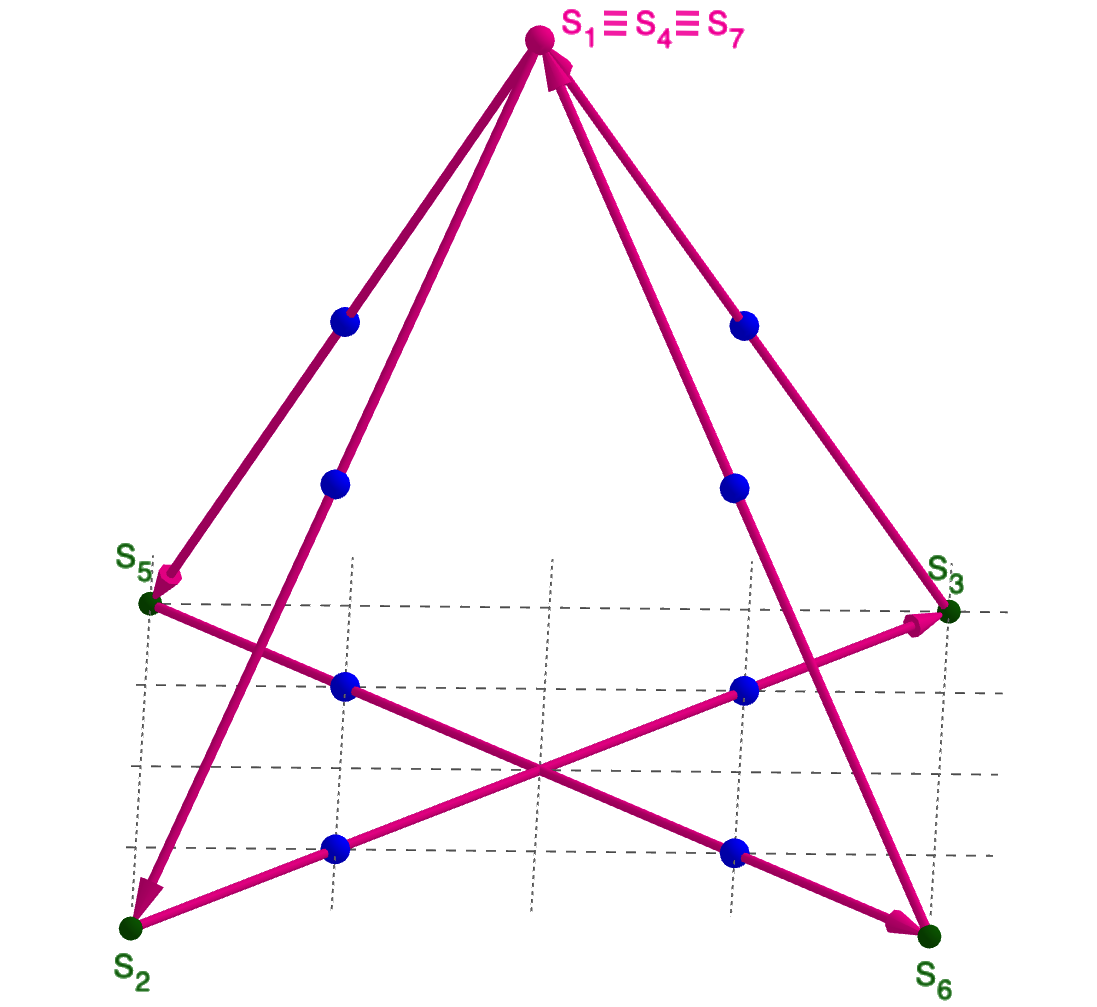}
\end{center}
\caption{The minimum-link closed polygonal chain $\mathcal{P}(6):=\left(\frac{1}{2},\frac{1}{2},2 \right)$-$\left(-\frac{1}{2},-\frac{1}{2},0 \right)$-$(\frac{3}{2},\frac{3}{2},0)$-$\left(\frac{1}{2},\frac{1}{2},2 \right)$-$(-\frac{1}{2},\frac{3}{2},0)$-$(\frac{3}{2},-\frac{1}{2},0)$-$\left(\frac{1}{2},\frac{1}{2},2 \right)$\\ visits all the nodes of $H(2,3)$ once and only once (picture realized with GeoGebra \cite{Geogebra:32}).}
\label{fig:Figure_2}
\end{figure}

Lastly, using the described covering circuits, we get a circuit that starts and ends at a point that lies on the generating line of the sheaf of planes that contains all the planes belonging to $\{\sigma\}_l$.

Thus,
\vspace{-2mm}
\begin{equation}
 \resizebox{.99\hsize}{!}{$
\{{\rm V}_{4l+1}, {\rm V}_{4l+2}, {\rm V}_{4l+3}, {\rm V}_{4l+4}\} \hspace{-0.1mm} \subset \hspace{-0.1mm} P_m(4) \Rightarrow \exists! \hspace{0.8mm} l \in \{0,1,\dots,2^{k-2}-1\} : \left({\rm S}_{3l+1}\right)\hspace{-1mm}\textnormal{-}\hspace{-1mm}\left({\rm S}_{3l+2}\right)\hspace{-1mm}\textnormal{-}\hspace{-1mm}\left({\rm S}_{3l+3}\right)\hspace{-1mm}\textnormal{-}\hspace{-1mm}\left({\rm S}_{3l+4}\right) \hspace{-0.1mm}\subset \hspace{-0.1mm} \{\sigma\}_l $}.
\end{equation}

As shown in Figure 3, we obtain a covering cycle for $H(2,k)$ by the repetition, for every $l \in \{0, 1, 2, \dots, 2^{k-2}-1\}$, of the covering circuit described by
\begin{align}
    &{\rm S}_{3l+1}\equiv {\rm C}+\frac{3}{2}\cdot \vv{e_1};  & \nonumber \\
    &{\rm S}_{3l+2}\equiv {\rm C} + 3\cdot \vv{s_l}&  {\rm S}_{3l+1}+\frac{1}{3}\cdot \vv{{\rm S}_{3l+1}{\rm S}_{3l+2}}\equiv{\rm V}_{4l+1}; \nonumber \\
    &{\rm S}_{3l+3}\equiv {\rm C}- 3 \cdot \vv{s_l} & {\rm S}_{3l+2}+\frac{1}{3}\cdot\vv{{\rm S}_{3l+2}{\rm S}_{3l+3}}\equiv{\rm V}_{4l+2}, \nonumber \\
    & & {\rm S}_{3l+2}+\frac{2}{3}\cdot\vv{{\rm S}_{3l+2}{\rm S}_{3l+3}}\equiv{\rm V}_{4l+3}; \nonumber \\
    &{\rm S}_{3l+4}\equiv {\rm S}_{3l+1}  & {\rm S}_{3l+3}+\frac{2}{3}\cdot\vv{{\rm S}_{3l+3}{\rm S}_{3l+4}}\equiv{\rm V}_{4l+4}. \nonumber
\end{align}
\begin{figure}[H]
\begin{center}
\includegraphics[width=\linewidth]{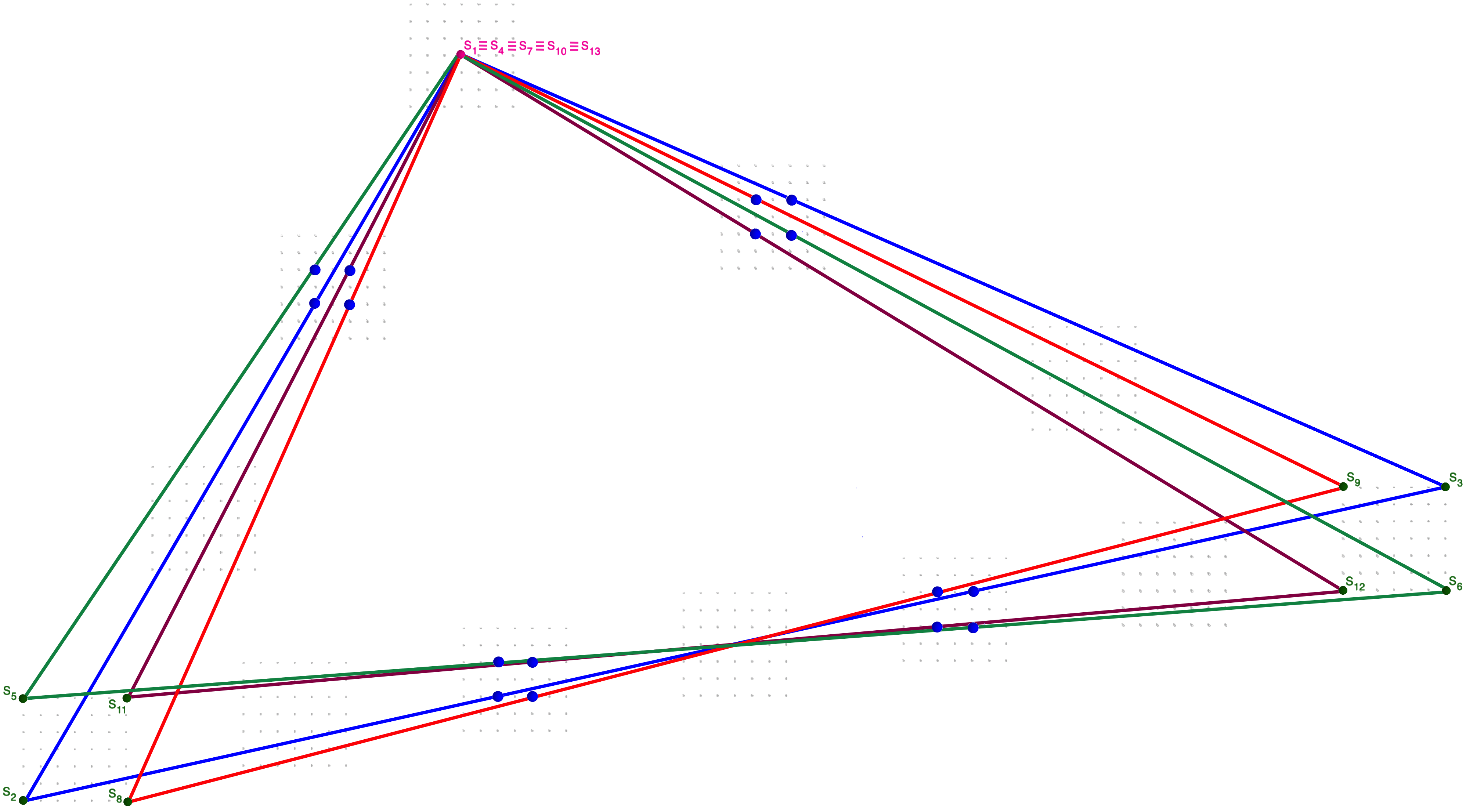}
\end{center}
\caption{The minimum-link closed polygonal chain $\mathcal{P}(12):=\left(\frac{1}{2},\frac{1}{2},\frac{1}{2},\frac{3}{2} \right)$-$\left(-1,-1,-1, 0 \right)$-$(2,2,2,0)$-$\left(\frac{1}{2},\frac{1}{2},\frac{1}{2},\frac{3}{2} \right)$-$(-1,-1,2,0)$-$(2,2,-1,0)$-\\$\left(\frac{1}{2},\frac{1}{2},\frac{1}{2},\frac{3}{2} \right)$-$(-1,2,-1,0)$-$(2,-1,2,0)$-$\left(\frac{1}{2},\frac{1}{2},\frac{1}{2},\frac{3}{2} \right)$-$(-1,2,2,0)$-$(2,-1,-1,0)$-$\left(\frac{1}{2},\frac{1}{2},\frac{1}{2},\frac{3}{2} \right)$\\ joins all the nodes of $H(2,4)$ (picture realized with GeoGebra \cite{Geogebra:32}).}
\label{fig:Figure_4}
\end{figure}

Therefore, we have constructively proven that $h(2,k)\leq 3\cdot 2^{k-2}$, for any $k \in \mathbb{N}-\{0,1\}$.
\end{proof}

\vspace{2mm}
Lastly, we note that it is also possible to generate a covering cycle that does not have coincident Steiner points, except for the first and the last one, following a variation of the previous algorithm, as shown by Corollary \ref{upper bound 2} (see also Figures 4\&5).

\begin{figure}[H]
\begin{center}
\includegraphics[width=\linewidth]{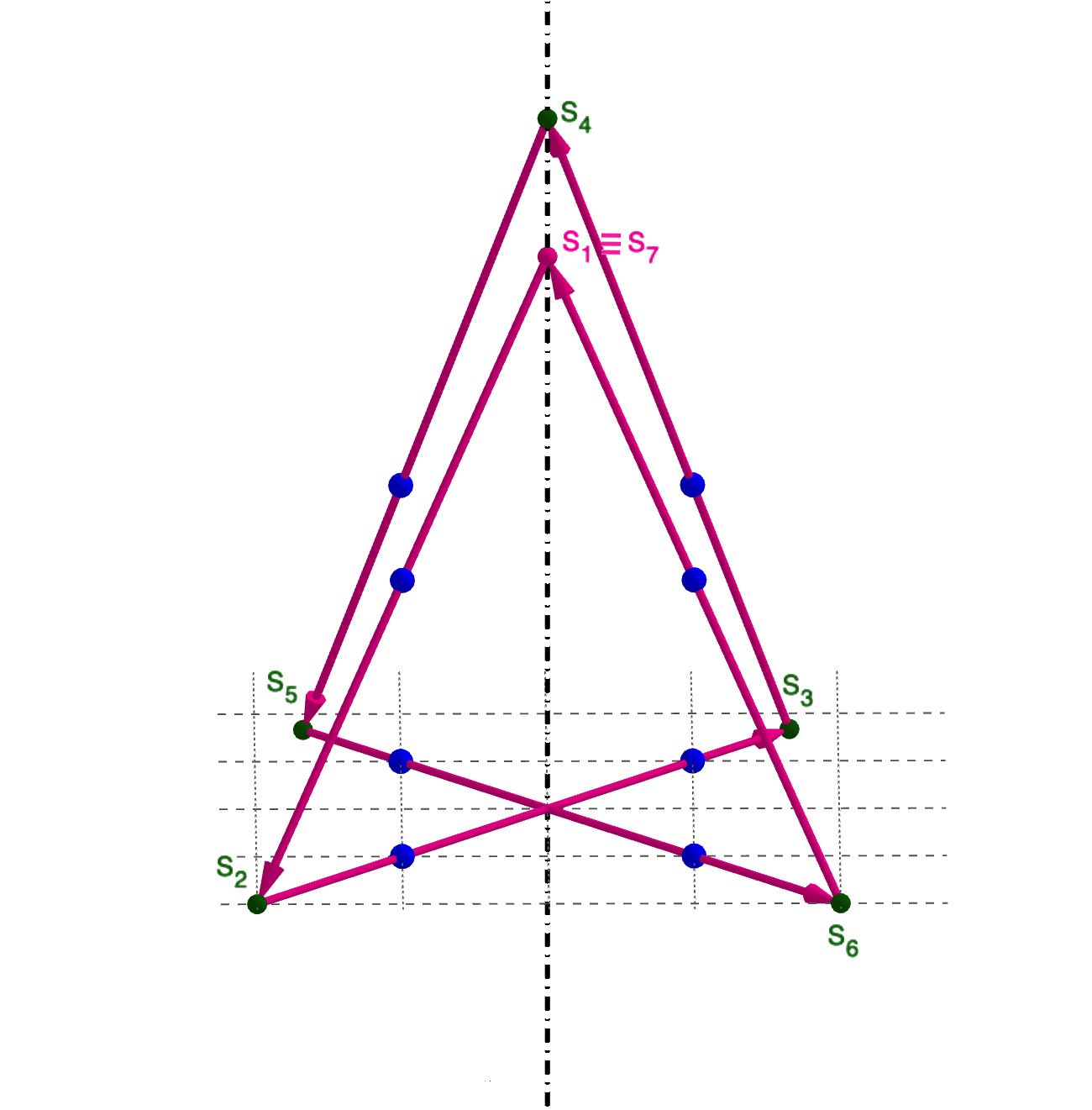}
\end{center}
\caption{The minimum-link perfect covering cycle $\mathcal{\bar{C}}(h(2,3)):=\left(\frac{1}{2},\frac{1}{2},2 \right)$-$\left(-\frac{1}{2},-\frac{1}{2},0 \right)$-$\left(\frac{4}{3},\frac{4}{3},0 \right)$-$\left(\frac{1}{2},\frac{1}{2},\frac{5}{2} \right)$-$\left(-\frac{1}{3},\frac{4}{3},0 \right)$-$\left(\frac{3}{2},-\frac{1}{2},0 \right)$-$\left(\frac{1}{2},\frac{1}{2},2 \right)$\\ joins all the nodes of $H(2,3)$ (picture realized with GeoGebra \cite{Geogebra:32}).}
\label{fig:Figure_3}
\end{figure}

\begin{figure}[H]
\begin{center}
\includegraphics[width=\linewidth]{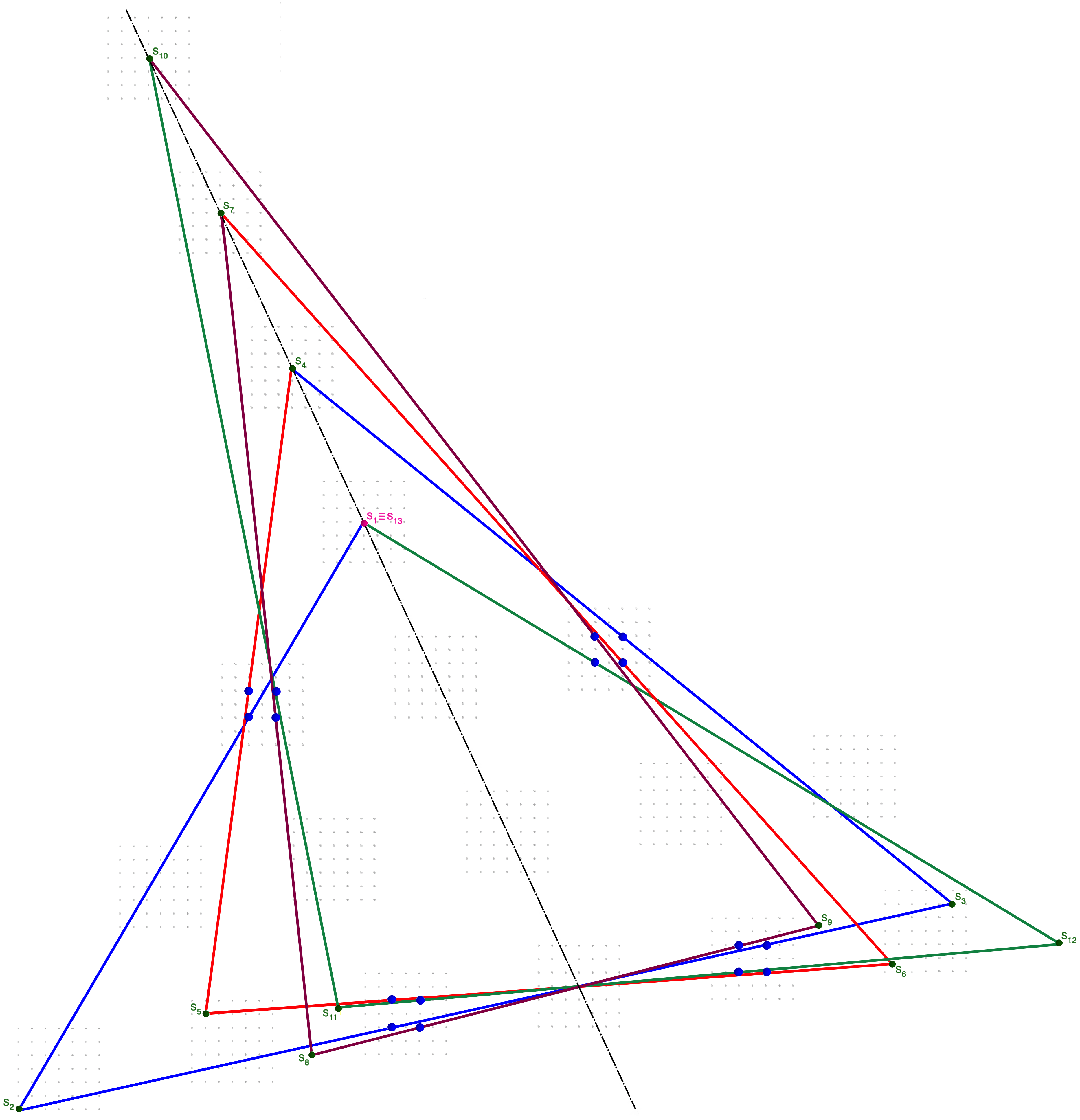}
\end{center}
\caption{The minimum-link perfect covering cycle $\mathcal{\bar{C}}(h(2,4))\hspace{-1mm}:=\hspace{-1mm}\left(\frac{1}{2},\frac{1}{2},\frac{1}{2},\frac{3}{2} \right)$-$\left(-1,-1,-1,0 \right)$-$\left(\frac{3}{2},\frac{3}{2},\frac{3}{2},0 \right)$-$\left(\frac{1}{2},\frac{1}{2},\frac{1}{2},2 \right)$-$\left(-\frac{1}{2},-\frac{1}{2},\frac{3}{2},0 \right)$-$\left(\frac{4}{3},-\frac{1}{3},-\frac{1}{3},0 \right)$-\\$\left(\frac{1}{2},\frac{1}{2},\frac{1}{2},\frac{5}{2} \right)$-$\left(-\frac{1}{3},\frac{4}{3},-\frac{1}{3},0 \right)$-$\left(\frac{5}{4},-\frac{1}{4},\frac{5}{4},0 \right)$-$\left(\frac{1}{2},\frac{1}{2},\frac{1}{2},3 \right)$-$\left(-\frac{1}{4},\frac{5}{4},\frac{5}{4},0 \right)$-$\left(2,-1,-1,0 \right)$-$\left(\frac{1}{2},\frac{1}{2},\frac{1}{2},\frac{3}{2} \right)$\\ visits all the nodes of $H(2,4)$ once and only once (picture realized with GeoGebra \cite{Geogebra:32}).}
\label{fig:Figure_5}
\end{figure}

\begin{corollary} \label{upper bound 2}
Given $H(2,k)$ with $k\in \mathbb{N}-\{0,1\}$, it is always possible to construct a perfect covering cycle $\bar{\mathcal{C}}(h(2,k)) := \{\overline{{\rm{S_1 S_2}}} \cup \overline{{\rm{{S_2 S_3}}}} \cup \dots \cup \overline{{\rm{S}}_{3 \cdot 2^{k-2}
}{\rm{S}}_{1}}\}$ having exactly $3 \cdot 2^{k-2}$ distinct Steiner points and such that ${\rm{S}}_{3 \cdot 2^{k-2} + 1} \equiv {\rm{S}}_{1}$.
\end{corollary}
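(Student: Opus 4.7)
The plan is to start from the covering cycle constructed in the proof of Theorem \ref{upper bound 1} and remove the degeneracy at the apex. There, all $2^{k-2}$ points $S_{3l+1}$ coincide at the single location $C + \frac{3}{2}\vv{e_k}$ on the line $r$; since each ``triangle'' in that construction lies in its own plane $\sigma_l$ and the $2^{k-2}$ planes pairwise intersect only in $r$, one has full freedom to slide each apex independently along $r$. Accordingly, I would introduce parameters $a_0, a_1, \dots, a_{2^{k-2}-1} > 1$, all pairwise distinct, and define
\begin{equation}
S_{3l+1} := C + a_l\,\vv{e_k}, \qquad l = 0, 1, \dots, 2^{k-2}-1,
\end{equation}
together with the closing convention $a_{2^{k-2}} := a_0$, so that $S_{3\cdot 2^{k-2}+1} \equiv S_1$ is automatic.

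For the two middle Steiner points of the $l$-th triangle, I would redo the elementary planar construction inside $\sigma_l$. Using intrinsic coordinates $(u,t)$ generated by $\vv{s_l}$ and $\vv{e_k}$, the four nodes of $\sigma_l$ sit at $(\pm 1, 0)$ and $(\pm 1, 1)$, and the subpath must go from the apex $(0, a_l)$ through two base points on the line $t=0$ to the next apex $(0, a_{l+1})$. Imposing that the first segment passes through $(1,1)$ fixes its base point at $u = a_l/(a_l-1)$, while imposing that the last segment passes through $(-1,1)$ fixes the other at $u = -a_{l+1}/(a_{l+1}-1)$; the middle segment then lies along $t=0$ and picks up $(1,0)$ and $(-1,0)$ for free. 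Translated back to $\mathbb{R}^k$, this yields
\begin{equation}
S_{3l+2} := C + \frac{a_l}{a_l-1}\,\vv{s_l}, \qquad S_{3l+3} := C - \frac{a_{l+1}}{a_{l+1}-1}\,\vv{s_l},
\end{equation}
which reduces to the formulas of Theorem \ref{upper bound 1} when $a_l = a_{l+1} = 3/2$; any pairwise distinct choice of $a_l > 1$ with $a_{2^{k-2}} = a_0$ works, and one such choice produces the cycles shown in Figures \ref{fig:Figure_3} and \ref{fig:Figure_5}.

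It then remains to verify that no two Steiner points coincide except for the forced identification $S_{3\cdot 2^{k-2}+1} \equiv S_1$. The apices are distinct by the choice of the $a_l$; every apex has $t$-component $a_l > 1$ in its own plane while every base point has $t = 0$, so apex--base collisions are excluded. Two base points inside the same plane $\sigma_l$ sit on opposite sides of $r$ (opposite signs of $u$), and hence are distinct. Finally, two base points in different planes $\sigma_l \neq \sigma_{l'}$ could coincide only on $\sigma_l \cap \sigma_{l'} = r$, but each base point has nonzero $u$-coordinate in its own plane and therefore does not lie on $r$, so they are distinct as well.

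The only real obstacle is of a bookkeeping nature: one must check that the rewritten three-segment subpath visits exactly the four nodes of $\sigma_l$ and nothing else (which follows because it lies entirely in $\sigma_l$ and no node of $H(2,k)$ lies on $r$, as $1/2 \notin \{0,1\}$), and that the positivity condition $a_l > 1$ is preserved under the cyclic wrap-around $a_{2^{k-2}} = a_0$, which is immediate.
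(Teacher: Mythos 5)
Your proposal is correct and follows essentially the same route as the paper: keep the $2^{k-2}$ planar three-link circuits of Theorem \ref{upper bound 1}, assign pairwise distinct heights to the apices on the line $r$, and recompute the two base Steiner points of each plane accordingly. In fact, your general formulas ${\rm S}_{3l+2}={\rm C}+\frac{a_l}{a_l-1}\cdot\vv{s_l}$ and ${\rm S}_{3l+3}={\rm C}-\frac{a_{l+1}}{a_{l+1}-1}\cdot\vv{s_l}$ reproduce exactly the cycles displayed in Figures \ref{fig:Figure_3} and \ref{fig:Figure_5}, and your explicit closing convention $a_{2^{k-2}}:=a_0$ together with the pairwise-distinctness check is handled more carefully than the paper's specific choice $x_k({\rm S}_{3l+1})=\frac{l+2}{2}$ with coefficient $1+\frac{1}{l}$, which as printed is undefined at $l=0$ and does not literally return to ${\rm S}_1$.
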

\begin{proof}
We constructively prove the corollary by following the same approach that has been introduced in the proof of Theorem \ref{upper bound 1}, taking also into account that the Steiner points of the type ${\rm S}_ {3l + 1}$, that lie on the straight line $r$, must have the coordinates $x_{k}({\rm S}_{3l+1})$ different from each other.

We can choose $ x_{k}({\rm S}_{3l+1}):= \frac {l+2} {2}$ to obtain
\begin{align}
    &{\rm S}_{3l+1}\equiv {\rm C}+\frac {l+2} {2}\cdot \vv{e_2}; & \\
    &{\rm S}_{3l+2}\equiv {\rm C}+\left(1+\frac{1}{l}\right)\cdot \vv{s_l} & {\rm S}_{3l+1}+\frac{l+1}{2l+3}\cdot\vv{{\rm S}_{3l+1}{\rm S}_{3l+2}}\equiv{\rm V}_{4l+1}; \nonumber \\
    &{\rm S}_{3l+3}\equiv{\rm C}-\left(1+\frac{1}{l}\right)\cdot \vv{s_l} & {\rm S}_{3l+2}+\frac{l+1}{2l+3}\cdot\vv{{\rm S}_{3l+2}{\rm S}_{3l+3}}\equiv{\rm V}_{4l+2}; \nonumber \\ 
    & &{\rm S}_{3l+2}+\frac{l+2}{2l+3}\cdot\vv{{\rm S}_{3l+2}{\rm S}_{3l+3}}\equiv{\rm V}_{4l+3}; \nonumber \\
    &{\rm S}_{3l+4}\equiv {\rm C}+\frac {l+3} {2}\cdot \vv{e_2} & {\rm S}_{3l+3}+\frac{l+2}{2l+3}\cdot\vv{{\rm S}_{3l+3}{\rm S}_{3l+4}}\equiv{\rm V}_{4l+4}. \nonumber
\end{align}

Therefore, for any given $k \in \mathbb{N}-\{0,1\}$ we have provided a perfect covering cycle, for $H(2,k)$, which is characterized by a link-length of $3 \cdot 2^{k-2}$ and such that no Steiner point is visited more than once, with the only exception of the starting/ending point, ${\rm S}_1 \equiv {\rm S}_{1+3 \cdot 2^{k-2}}$.

This concludes the proof of Corollary \ref{upper bound 2}.
\end{proof}


\section{Conclusion}
Although for any $H(2,k)$ we have constructively shown the existence of perfect covering cycles whose link-length is equal to $3 \cdot 2^{k-2}$, the problem of finding an analogous formula concerning optimal covering paths for any given set $H(n,k):=\{\{0, 1, \ldots, n-1\} \times \{0, 1, \ldots, n-1\} \times \cdots \times \{0, 1, \ldots, n-1\}\} \subset \mathbb{R}^k$, such that $n \geq 4 \wedge k \geq 3$, remains completely open \cite{Ripa:22} (e.g., we can only say that $h(4,3) \in \mathbb\{21, 22, 23\}$ \cite{OEIS:31, Ripa:23}).


\section*{Acknowledgments}
We sincerely thank Luca Onnis for his kind assistance on the initial phase of the present preprint.

\bibliographystyle{plain}
\bibliography{Optimal_cycles_enclosing_all_the_nodes_of_a_k-dimensional_hypercube}

\end{document}